\nonstopmode \numberwithin{equation}{section}
\newtheorem{theorem}{Theorem}[section]
\newtheorem{proposition}{Proposition}[section]
\newtheorem{lemma}{Lemma}[section]
\DeclareMathOperator{\real}{Re}
\begin{document}
\title[Inequalities for $k-$Bessel function]{Inequalities for the modified $k$-
Bessel function}

\author{Saiful. R. Mondal}
\address{Department of Mathematics\\
King Faisal University, Al Ahsa 31982, Saudi Arabia}
\email{saiful786@gmail.com}

\author{Kottakkaran S. Nisar}
\address{Department of Mathematics\\
Prince Sattam bin Abdulaziz University, Saudi Arabia}
\email{ksnisar1@gmail.com}
\subjclass[2010]{33C10, 26D07}
\keywords{Generalized $k$-Bessel functions, Monotonicity, log-convexity, Tur\'an type inequality}

\begin{abstract}
The article considers the  generalized $k$-Bessel functions and represents it as Wright functions. Then we study the monotonicity properties of the ratio of two different orders $k$- Bessel functions, and the ratio of the $k$-Bessel and the $m$-Bessel functions. The log-convexity with respect to the order of the $k$-Bessel also given. An investigation regarding the monotonicity of the ratio of the $k$-Bessel and $k$-confluent hypergeometric functions are discussed.
\end{abstract}
\maketitle
\section{Introduction}
\label{Intro}
 One of the generalization of the classical gamma function $\Gamma$
studied in \cite{Diaz} is defined by the limit formula
\begin{align}\label{eqn-1}
\Gamma_k(x) := \lim_{n \to \infty} \frac{n! \; k^n (n^k)^{\tfrac{x}{k}-1}}{(x)_{n, k}}, \quad k>0,
\end{align}
where $(x)_{n, k}:=x(x+k) (x+2k)\ldots (x+(n-1)k)$ is called $k$-Pochhammer symbol. The above $k-$gamma function also
have an integral representation as
\begin{align}\label{eqn-2}
\Gamma_k(x)= \int_0^\infty t^{x-1} e^{-\frac{t^{k}}{k}} dt, \quad \real(x)>0.
\end{align}
Properties of the $k$-gamma functions have been studies by many researchers \cite{CGK,CGK2,VK, MM, Mubeen13}. Follwoing properties are required in sequel:
\begin{itemize}
\item[(i)]
$\Gamma _{k}\left( x+k\right) =x\Gamma _{k}\left( x\right)$
\item[(ii)]
$\Gamma _{k}\left( x\right) =k^{\frac{x}{k}-1}\Gamma \left( \frac{x}{k}\right)$
\item[(iii)] $\Gamma _{k}\left( k\right)=1$
\item[(iv)]$\Gamma _{k}\left( x+nk\right)=\Gamma _{k}(x) (x)_{n, k}$
\end{itemize}

Motivated with the above generalization of the $k$-gamma functions, Romero et. al.\cite{Romero-Cerutti} introduced the
$k-$Bessel function of the first kind defined by the series
\begin{align}\label{k1}
J_{k,\nu }^{\gamma ,\lambda }\left( x\right) :=\sum_{n=0}^{\infty }\frac{%
\left( \gamma \right) _{n,\;k}}{\Gamma _{k}\left( \lambda n+\upsilon +1\right)
}\frac{\left( -1\right) ^{n}\left( x/2\right) ^{n}}{\left( n!\right) ^{2}},
\end{align}%
where $k\in \mathbb{R^{+}}$; $\alpha,\lambda,\gamma,\upsilon \in C$; $\real(\lambda)>0$ and $\real(\upsilon) >0$. They also established two recurrence relations for $J_{k,\nu }^{\gamma ,\lambda }$.

In this article, we are considering the following function:
\begin{align}\label{eqn-modfb}
I_{k,\nu }^{\gamma ,\lambda }\left( x\right) :=\sum_{n=0}^{\infty }\frac{%
\left( \gamma \right) _{n,\;k}}{\Gamma _{k}\left( \lambda n+\upsilon +1\right)
}\frac{\left( x/2\right) ^{n}}{\left( n!\right) ^{2}},
\end{align}
Since
\[\lim_{k, \lambda, \gamma \to 1 }I_{k,\nu }^{\gamma ,\lambda }\left( x\right)=
\sum_{n=0}^{\infty }\frac{%
1}{\Gamma\left( n+\upsilon +1\right)
}\frac{\left( x/2\right) ^{n}}{n!}=\left(\frac{2}{x}\right)^{\frac{\nu}{2}} I_\nu(\sqrt{2x}),
\]
the classical modified Bessel functions of first kind. In this sense, we can call $I_{k,\nu }^{\gamma ,\lambda }$ as the modified $k$-Bessel functions of first kind. In fact, we can express both $J_{k,\nu }^{\gamma ,\lambda }$ and $I_{k,\nu }^{\gamma ,\lambda }$ together in
\begin{equation}\label{eqn-genb}
\mathtt{W}_{k,\nu, c }^{\gamma,\lambda }(x) :=\sum_{n=0}^{\infty}\frac{
( \gamma )_{n,\;k}}{\Gamma_{k}(\lambda n+ \nu +1)
}\frac{(-c)^n( x/2) ^{n}}{\left( n!\right) ^{2}}, \quad c \in \mathbb{R}.
\end{equation}
We can termed $\mathtt{W}_{k,\nu }^{\gamma ,\lambda }$  as the generalized $k$-Bessel function.

First we study the representation formulas for $\mathtt{W}_{k,\nu }^{\gamma ,\lambda }$
in term of the classical Wright functions.
Then we will study about
the monotonicity and log-convexity properties of $I_{k,\nu }^{\gamma ,\lambda }$.

\section{Representation formula for the generalized $k$-Bessel function}
\label{sec1}

The generalized hypergeometric function ${}_pF_q(a_1,\ldots, a_p;c_1,\ldots,c_q;x)$, is given by the power series
\begin{equation}\label{eqn:gen-hyp-func}
{}_pF_q(a_1,\ldots, a_p;c_1,\ldots,c_q;z) = \sum_{k=0}^{\infty}\dfrac{(a_1)_{k}
\cdots (a_p)_{k}}{(c_1)_{k}\cdots(c_q)_{k}(1)_{k}}z^k, \quad \quad |z|<1,
\end{equation}
where the $c_{i}$ can not be zero or a negative integer. Here $p$ or $q$ or both are allowed to be zero.
The series $(\ref{eqn:gen-hyp-func})$ is absolutely convergent for all finite $z$  if $p\leq q$ and for $|z|<1$ if $p=q+1$.
When $p>q+1$, then the series diverge for $z \not=0$ and the series does not terminate.

 The generalized Wright hypergeometric function ${}_p\psi_q(z)$ is given by the series
\begin{equation}\label{eqn-9-bessel}
{}_p\psi_q(z)={}_p\psi_q\left[\begin{array}{c}
(a_i,\alpha_i)_{1,p} \\
(b_j,\beta_j)_{1,q}
\end{array}\bigg|z\right]=\displaystyle\sum_{k=0}^{\infty}\frac{\prod_{i=1}^{p}\Gamma(a_{i}+\alpha_{i}k)}
{\prod_{j=1}^{q}\Gamma(b_{j}+\beta_{j}k)}
\frac{z^{k}}{k!},
\end{equation}
where $a_i, b_j\in \mathbb{C}$, and real $\alpha_i, \beta_j\in \mathbb{R}$ ($i=1,2,\ldots,p; j =1,2,\ldots,q$).
The asymptotic behavior of this function for large values of argument of $z\in \mathbb{C}$
were studied in \cite{CFox, Kilbas} and under the condition
\begin{equation}\label{eqn-10-bessel}
\displaystyle\sum_{j=1}^{q}\beta_{j}-\displaystyle\sum_{i=1}^{p}\alpha_{i}>-1
\end{equation}
in literature \cite{Wright-2, Wright-3}. The more properties of the Wright function are investigated in
\cite{Kilbas, Kilbas-itsf, KST}.

Now we will give the representation of the generalized $k$-Bessel functions in terms of the Wright and generalized
hypergeometric functions.
\begin{proposition}
Let,  $k \in \mathbb{R}$ and $\lambda ,\gamma ,\nu \in \mathbb{C}$ such that $\real(
\lambda) >0,\real( \nu ) >0.$ Then
\begin{equation*}
\mathtt{W}_{k,\nu,c }^{\gamma ,\lambda }(x) =\frac{1}{k^{\frac{\nu+k+1}{k}}\Gamma\left(\frac{\gamma}{k}\right)}
{}_1\psi_2\left[\begin{array}{ccc}
\left(\frac{\gamma}{k}, 1\right)& \\
\left(\frac{\nu+1}{k}, \frac{\gamma}{k}\right) & (1, 1)
\end{array}\bigg|-\frac{c x}{2 k^{\frac{\lambda}{k}-1}}\right]
\end{equation*}
\end{proposition}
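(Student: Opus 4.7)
The plan is to reduce both the $k$-Pochhammer symbol and the $k$-gamma factor in the series defining $\mathtt{W}_{k,\nu,c}^{\gamma,\lambda}$ to classical gamma functions, using the identities collected at the start of Section 1, and then to recognize the resulting series as a ${}_1\psi_2$ in the sense of (2.2).

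First, I would combine properties (iv) and (ii) to write $(\gamma)_{n,k} = \Gamma_k(\gamma+nk)/\Gamma_k(\gamma) = k^n\,\Gamma(\gamma/k + n)/\Gamma(\gamma/k)$, and apply (ii) directly to obtain $\Gamma_k(\lambda n + \nu + 1) = k^{(\lambda n + \nu + 1)/k - 1}\,\Gamma\bigl((\nu+1)/k + (\lambda/k)n\bigr)$. Substituting these into the defining series of $\mathtt{W}_{k,\nu,c}^{\gamma,\lambda}$ leaves three kinds of factors in the $n$-th term: the classical gamma ratio $\Gamma(\gamma/k + n)/\bigl[\Gamma(\gamma/k)\,\Gamma((\nu+1)/k + (\lambda/k)n)\bigr]$, a total power $k^{n + 1 - (\lambda n + \nu + 1)/k}$, and the analytic part $(-c)^n(x/2)^n/(n!)^2$.

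Next, I would split the exponent of $k$ as $\bigl[1 - (\nu+1)/k\bigr] + n\bigl[1 - \lambda/k\bigr]$, pull the $n$-independent piece out of the sum, and absorb the $n$-dependent piece into the argument by observing $k^{n(1 - \lambda/k)}(-c)^n(x/2)^n = \bigl(-cx/(2k^{\lambda/k - 1})\bigr)^n$. Rewriting $(n!)^2 = \Gamma(n+1)\cdot n!$ and comparing coefficients term by term with definition (2.2), I would identify $a_1 = \gamma/k$, $\alpha_1 = 1$ in the numerator and the pairs $((\nu+1)/k,\lambda/k)$ and $(1,1)$ in the denominator, which yields the ${}_1\psi_2$ representation. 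Convergence of the resulting series is automatic since the Wright condition (2.3) here reads $\lambda/k > -1$, guaranteed by $k>0$ and $\real(\lambda)>0$.

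No step is a true obstacle; the only delicate item is the careful accounting of the powers of $k$, since contributions from the $k$-Pochhammer symbol, from $1/\Gamma_k(\lambda n + \nu + 1)$, and from the rescaling of the argument must all combine correctly to reproduce the stated $n$-independent prefactor $1/[k^{(\nu+k+1)/k}\Gamma(\gamma/k)]$. I would also want to double-check that the second denominator $\beta$ in the Wright symbol reads $\lambda/k$, as that is the parameter carrying the $\lambda n$ in the original series, and verify the exponent of the global $k$-factor against my computation $k^{1 - (\nu+1)/k}$ before declaring the normalization settled.
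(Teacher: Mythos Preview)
Your approach is correct and essentially identical to the paper's own proof, which likewise rewrites $(\gamma)_{n,k}$ and $\Gamma_k(\lambda n+\nu+1)$ via properties (ii) and (iv), collects powers of $k$, and matches the resulting series against the definition of ${}_1\psi_2$. Your closing caveats are in fact justified: the careful bookkeeping gives the prefactor $k^{1-(\nu+1)/k}$ and the denominator pair $\bigl((\nu+1)/k,\lambda/k\bigr)$, so the printed exponent $(\nu+k+1)/k$ and the appearance of $\gamma/k$ in the Wright parameters are typos in the statement rather than errors in your argument.
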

\begin{proof}
Using the relations $\Gamma _{k}\left( x\right) =k^{\frac{x}{k}-1}\Gamma \left( \frac{x}{k}\right)$
and
$\Gamma _{k}\left( x+nk\right)=\Gamma _{k}(x) (x)_{n, k}$,
the generalized $k$-Bessel functions defined in  $\eqref{eqn-genb}$  can be rewrite as
\begin{align}
\mathtt{W}_{k,\nu,c }^{\gamma ,\lambda }(x)
&=\sum_{n=0}^\infty \frac{\Gamma_k(\gamma+n k)}{\Gamma_k(\lambda n+\nu+1) \Gamma_k(\gamma)} \frac{(-c)^n}{(n!)^2} \left(\frac{x}{2}\right)^n\\
&=\frac{1}{k^{\frac{\nu+k+1}{k}} \Gamma\left(\frac{\gamma}{k}\right)}\sum_{n=0}^\infty \frac{\Gamma\left(\frac{\gamma}{k}+n \right)}{\Gamma\left(\frac{\lambda}{k} n+\frac{\nu+1}{k}\right) \Gamma\left(\frac{\gamma}{k}\right)} \frac{(-c)^n}{\Gamma(n+1)\Gamma(n+1)} \left(\frac{x}{2 k^{\frac{\lambda}{k}-1}}\right)^n\\
&=\frac{1}{k^{\frac{\nu+k+1}{k}}\Gamma\left(\frac{\gamma}{k}\right)}
{}_1\psi_2\left[\begin{array}{ccc}
\left(\frac{\gamma}{k}, 1\right)& \\
\left(\frac{\nu+1}{k}, \frac{\gamma}{k}\right) & (1, 1)
\end{array}\bigg|-\frac{c x}{2 k^{\frac{\lambda}{k}-1}}\right]
\end{align}
Hence the result follows.
\end{proof}

\section{Monotonicty and log-convexity properties}
\label{sec2}

This section discuss the monotonicity and log-convexity properties for the modified $k$-Bessel functions $\mathtt{W}_{k,\nu,-1 }^{\gamma ,\lambda }(x)=\mathtt{I}_{k,\nu}^{\gamma ,\lambda }(x)$.

Following lemma due to Biernacki and Krzy\.z \cite{Biernacki-Krzy} will be required.

\begin{lemma}\label{lemma:1}\cite{Biernacki-Krzy}
Consider the power series $f(x)=\sum_{k=0}^\infty a_k x^k$ and $g(x)=\sum_{k=0}^\infty b_k x^k$, where $a_k \in \mathbb{R}$ and $b_k > 0$ for all $k$. Further suppose that both series converge on $|x|<r$. If the sequence $\{a_k/b_k\}_{k\geq 0}$ is increasing (or decreasing), then the function $x \mapsto f(x)/g(x)$ is also increasing (or decreasing) on $(0,r)$.
\end{lemma}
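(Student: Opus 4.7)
The plan is to reduce the problem to controlling the sign of $(f/g)'$ on $(0,r)$. Since $b_k > 0$ for all $k$, we have $g(x) > 0$ for $x \in (0,r)$, so by the quotient rule the sign of $(f/g)'$ coincides with the sign of $H(x) := f'(x)g(x) - f(x)g'(x)$. The task therefore reduces to showing $H(x) \ge 0$ on $(0,r)$ when $\{a_k/b_k\}$ is increasing (and $H(x) \le 0$ when it is decreasing).

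Next I would expand $H$ as a power series through the Cauchy product. Writing $f'(x)=\sum_{i\ge 0}(i+1)a_{i+1}x^i$ and similarly for $g'$, and then reindexing, one arrives at
\begin{equation*}
H(x) \;=\; \sum_{n=0}^{\infty}\Biggl(\,\sum_{\substack{i,j\ge 0\\ i+j=n+1}}(i-j)\,a_i b_j\,\Biggr) x^n ,
\end{equation*}
where the boundary terms $i=0$ or $j=0$ may be safely absorbed into the summation because they are killed by the factor $(i-j)$. It therefore suffices to prove that each inner coefficient has the desired sign.

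The key step is to introduce $c_k := a_k/b_k$, so that $a_i = c_i b_i$, and then symmetrise the inner sum by averaging the contributions of the pair $(i,j)$ and its swap $(j,i)$. This produces the identity
\begin{equation*}
\sum_{i+j=n+1}(i-j)\,c_i b_i b_j \;=\; \tfrac{1}{2}\sum_{i+j=n+1}(i-j)(c_i-c_j)\,b_i b_j .
\end{equation*}
Since $b_i b_j > 0$ and the factors $(i-j)$ and $(c_i-c_j)$ carry the same sign once $(c_k)$ is assumed monotone increasing, every summand on the right is nonnegative. Each coefficient of $H$ is then nonnegative, so $H(x)\ge 0$ on $(0,r)$, proving that $f/g$ is increasing. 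The decreasing case follows by reversing every inequality, or equivalently by applying the same argument to $(-f,g)$.

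The only place requiring care is the algebraic bookkeeping in forming the Cauchy product and in justifying that the boundary terms $i=0,\,j=0$ can be folded into the summation (together with the standard fact that a power series may be differentiated term by term inside its radius of convergence). Once this is cleanly set up, the symmetrisation trick is the heart of the matter: it transmutes the hypothesis on $(a_k/b_k)$ into the pointwise inequality $(i-j)(c_i-c_j)\ge 0$ that drives the entire conclusion.
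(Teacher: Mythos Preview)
Your argument is correct and is in fact the standard proof of this classical result. The paper, however, does not give any proof of Lemma~\ref{lemma:1}: it simply quotes the statement from Biernacki and Krzy\.z \cite{Biernacki-Krzy} and uses it as a black box in the proof of the main theorem. So there is nothing to compare against.

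One small expository point: your remark that the boundary terms $i=0$ and $j=0$ are ``killed by the factor $(i-j)$'' is slightly misleading, since $(i-j)$ is not zero there. What actually happens is that after reindexing, the $f'g$ sum contributes $i\,a_ib_j$ (which vanishes at $i=0$) and the $-fg'$ sum contributes $-j\,a_ib_j$ (which vanishes at $j=0$); combining them gives $(i-j)\,a_ib_j$ valid over the full range $i,j\ge 0$, $i+j=n+1$, with no missing or extra terms. The mathematics is right; only the explanation could be sharpened.
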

The above lemma still holds when both $f$ and  $g$ are even, or both are odd functions.

\begin{theorem}
The following results  holds true for the modified $k$-Bessel functions.
\begin{enumerate}
\item For $\mu \geq \nu>-1$, the function $x \mapsto \mathtt{I}_{k,\mu}^{\gamma ,\lambda }(x)/\mathtt{I}_{k,\nu}^{\gamma ,\lambda }(x)$ is increasing on $(0, \infty)$ for some fixed $k >0$.
\item If $k\geq \lambda \geq m>0$, the function $x \mapsto \mathtt{I}_{k,\nu}^{\gamma ,\lambda }(x)/\mathtt{I}_{m,\nu}^{\gamma ,\lambda }(x)$ is increasing on $(0, \infty)$ for some fixed $\nu >-1$ and $\gamma \geq \nu+1$. 
\item The function $\nu \mapsto \mathcal{I}_{k,\nu}^{\gamma ,\lambda }(x)$ is log-convex on $(0, \infty)$ for some fixed $k, \gamma>0$ and $x>0$. Here, $\mathcal{I}_{k,\nu}^{\gamma ,\lambda }(x):=\Gamma_k(\nu+1)\mathtt{I}_{k,\nu}^{\gamma ,\lambda }(x)$.
\item Suppose that $\lambda \geq k>0$ and $\nu>-1$. Then
 \begin{enumerate}
 \item The function $x \mapsto \mathtt{I}_{k,\nu}^{\gamma ,\lambda }(x)/\Phi _{k}\left( a,c;x\right)$ is decreasing on $(0, \infty)$ for $a \geq c >0$ and  $0<\gamma \leq \nu+1$. Here, $\Phi _{k}\left( a; c; x\right)$ is the $k$-confluent hypergeometric functions.
 \item  The function $x \mapsto \mathtt{I}_{k,\nu}^{\gamma ,\lambda }(x)/\Phi _{k}\left( \gamma; \lambda; x/2\right)$ is decreasing on $(0, 1)$ for $\gamma >0$ and  $0< k \leq \lambda \leq \nu+1$.
  \item The function $x \mapsto \mathtt{I}_{k,\nu}^{\gamma ,\lambda }(x)/\Phi _{k}\left( \gamma; \lambda; x/2\right)$ is decreasing on $[1, \infty)$ for $\gamma >0$ and  $0< k \leq \min\{\lambda,\nu+1\}$.
 \end{enumerate}
\end{enumerate}
\end{theorem}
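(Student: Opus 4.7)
The plan is to treat parts (1), (2), and (4) uniformly by writing the relevant ratio of functions as a ratio of two power series with positive coefficients and then invoking the Biernacki--Krzy\.z lemma (Lemma~\ref{lemma:1}), while part (3) is handled by a separate log-convexity argument that exploits the closure of the class of log-convex functions under addition.

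For part (1), I would expand
\[
\mathtt{I}_{k,\mu}^{\gamma,\lambda}(x) = \sum_{n\geq 0} a_n x^n, \qquad \mathtt{I}_{k,\nu}^{\gamma,\lambda}(x) = \sum_{n\geq 0} b_n x^n,
\]
so that $a_n/b_n = \Gamma_k(\lambda n + \nu + 1)/\Gamma_k(\lambda n + \mu + 1)$, and determine the monotonicity of this ratio in $n$ using the identity $\Gamma_k(x) = k^{x/k-1}\Gamma(x/k)$ together with the monotonicity of the digamma function $\psi$. Part (2) follows the same strategy, but the coefficient ratio now carries an extra Pochhammer factor $(\gamma)_{n,k}/(\gamma)_{n,m}$; using property (iv), I would convert each Pochhammer symbol into a quotient of $\Gamma_k$ values and then use the hypotheses $k \geq \lambda \geq m$ and $\gamma \geq \nu+1$ to control the sign of the discrete derivative of $\log(a_n/b_n)$.

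For part (3), observe that
\[
\mathcal{I}_{k,\nu}^{\gamma,\lambda}(x) = \sum_{n=0}^\infty \frac{\Gamma_k(\nu+1)}{\Gamma_k(\lambda n + \nu + 1)} \cdot \frac{(\gamma)_{n,k} (x/2)^n}{(n!)^2}.
\]
Since a nonnegative sum of log-convex functions is log-convex, it suffices to show that each term is log-convex in $\nu$. Rewriting the gamma ratio via property (ii) reduces this to the statement that $y \mapsto \Gamma(y)/\Gamma(y+c)$ is log-convex for every $c \geq 0$, which follows immediately from $(\log \Gamma(y) - \log \Gamma(y+c))'' = \psi'(y) - \psi'(y+c) \geq 0$, because the trigamma $\psi'$ is strictly decreasing on $(0,\infty)$.

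Part (4) again applies Lemma~\ref{lemma:1}, this time with $g$ a $k$-confluent hypergeometric series; the coefficient ratio factors as a product of a Pochhammer ratio, a $\Gamma_k$ ratio, and an elementary factorial term, and the restrictions on the parameters ($a \geq c$, $\lambda \geq k$, $0 < \gamma \leq \nu+1$, etc.) are precisely what is needed to force each factor to be monotone in $n$ in the correct direction. The main obstacle will be item (4)(c): the domain $[1,\infty)$ is not covered by the standard form of the Biernacki--Krzy\.z lemma, and I expect that a separate tail estimate, or a substitution $x \mapsto 1/x$ combined with reindexing, will be required to reduce the problem to a power-series ratio on a bounded interval. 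Getting the cut-off at $x=1$ right under the weaker hypothesis $k \leq \min\{\lambda,\nu+1\}$ is likely to be the most delicate step.
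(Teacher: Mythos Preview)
Your plan for parts (1)--(3) and (4)(a) coincides with the paper's argument: the paper also applies Lemma~\ref{lemma:1} after writing each ratio as a quotient of power series, and verifies the required monotonicity of the coefficient ratio by logarithmically differentiating its continuous interpolant. The only cosmetic difference is that the paper works directly with the $k$-digamma function $\Psi_k=\Gamma_k'/\Gamma_k$ and its derivative $\Psi_k'(t)=\sum_{n\geq 0}(nk+t)^{-2}$, whereas you propose to first convert $\Gamma_k$ to $\Gamma$ via $\Gamma_k(x)=k^{x/k-1}\Gamma(x/k)$ and use the classical $\psi$ and $\psi'$; these are equivalent.

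Your concern about (4)(c), however, rests on a misreading of Lemma~\ref{lemma:1}. That lemma delivers monotonicity on the \emph{entire} interval of convergence $(0,r)$, here $(0,\infty)$; so once the coefficient sequence $\{w_n\}_{n\geq 0}$ is shown to be decreasing, the conclusion holds automatically on $(0,1)$ and on $[1,\infty)$ alike, and no substitution $x\mapsto 1/x$ or tail estimate is needed. The paper treats (4)(b) and (4)(c) exactly as it treats (4)(a): it writes $w_n=\rho_k(n)/\Gamma_k(\lambda)$ with
\[
\rho_k(x)=\frac{\Gamma_k(\lambda+xk)}{\Gamma_k(\nu+1+\lambda x)\,\Gamma(x+1)},
\]
and checks the sign of $\rho_k'/\rho_k$ using the monotonicity of $\Psi_k$, which reduces to the pointwise inequality $\nu+1+\lambda x\geq \lambda+xk$. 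The dichotomy ``$x\in(0,1)$'' versus ``$x\geq 1$'' in the paper refers to this \emph{index} variable $x$ (unfortunately the same letter as the function argument) and is precisely what separates the parameter hypotheses $k\leq\lambda\leq\nu+1$ of (b) from $k\leq\min\{\lambda,\nu+1\}$ of (c); it is not a separate analytic argument on two subdomains of the original variable. So the ``main obstacle'' you anticipate does not arise, and the detour you sketch would lead you away from the intended (and much shorter) proof.
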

\begin{proof}
${\bf (1)}$ Form \eqref{eqn-modfb} it follows that
\[\mathtt{I}_{ k, \nu}^{\gamma ,\lambda }(x)= \sum_{n=0}^\infty a_n(\nu) x^n\quad
\text{and} \quad \mathtt{I}_{ k, \nu}^{\gamma ,\lambda }(x)= \sum_{n=0}^\infty a_n(\mu) x^n,\]
where
\[a_n(\nu)= \frac{(\gamma)_{n,k}}{\Gamma_k(\lambda n+\nu+1) (n!)^2 2^n}
\quad \text{and} \quad
a_n(\mu)= \frac{(\gamma)_{n,k}}{\Gamma_k(\lambda n+\mu+1) (n!)^2 2^n}
\]
Consider the function
\[ f(t):= \frac{\Gamma_k(\lambda t+\mu+1)}{\Gamma_k(\lambda t+\nu+1)}.\]
Then the logarithmic differentiation yields
\begin{align*}
\frac{f'(t)}{f(t)}= \lambda( \Psi_k(\lambda t+\mu+1)-\Psi_k(\lambda t+\nu+1)).
\end{align*}
Here, $\Psi_k=\Gamma_k'/\Gamma_k $ is the $k$-digamma functions studied in \cite{Kwara14} and defined by
\begin{align}\label{def-digamma}
\Psi_k(t)=\frac{\log(k)-\gamma_1}{k}-\frac{1}{t}+\sum_{n=1}^\infty \frac{t}{nk(nk+t)}\end{align}
where $\gamma_1$ is the Euler-Mascheroni’s constant.

A calculation yields
\begin{align}\label{def-digamma-2} \Psi_k'(t)=\sum_{n=0}^\infty  \frac{1}{(nk+t)^2}, \quad k>0 \quad \text{and} \quad t>0.\end{align}
Clearly, $\Psi_k$ is increasing on $(0, \infty)$ and hence $f'(t)>0$ for all $t\geq0$ if $\mu \geq \nu>-1$. This, in particular, implies that the sequence $\{d_n\}_{n \geq 0}=\{a_n(\nu)/a_n(\mu)\}_{n \geq 0}$ is increasing and hence the conclusion follows from Lemma $\ref{lemma:1}$.

{\bf (2)}. This result also follows from Lemma $\ref{lemma:1}$ if the sequence
$\{d_n\}_{n \geq 0}=\{a_n^k(\nu)/a_n^m(\mu)\}_{n \geq 0}$ is increasing for $k \geq m >0$.
Here,
\[
a_{n}^{k}\left( \nu \right) =\frac{\left( \gamma \right) _{n,k}}{%
\Gamma _{k}\left( \lambda n+\nu+1\right) \left( n!\right) ^{2}} \quad \text{and} \quad
a_{n}^{m}\left( \nu \right)  =\frac{\left( \gamma \right) _{n,m}}{%
\Gamma _{m}\left( \lambda n+\nu+1\right) \left( n!\right) ^{2}},
\]
which together with the identity $\Gamma _{k}\left( x+nk\right)=\Gamma _{k}(x) (x)_{n, k}$ gives
\begin{align*}
d_n&=\frac{\left( \gamma \right) _{n,k}}{\left( \gamma \right) _{n,m}} \frac{
\Gamma _{m}\left( \lambda n+\nu+1\right) }{\Gamma _{k}\left( \lambda n+\nu+1\right)}\\
&= \frac{
\Gamma _{k}\left( \gamma +nk\right)\Gamma _{m}\left( \lambda n+\nu+1\right) }{\Gamma _{k}\left( \gamma +nm\right)\Gamma _{k}\left( \lambda n+\nu+1\right)}.
\end{align*}
Now to show that $\{d_n\}$ is increase, consider the function
\[ f(y):=\frac{
\Gamma _{k}\left( \gamma +yk\right)\Gamma _{m}\left( \lambda y+\nu+1\right) }{\Gamma _{k}\left( \gamma +ym\right)\Gamma _{k}\left( \lambda y+\nu+1\right)}\]
The logarithmic differentiation of $f$ yields
\begin{align}\label{3}
\frac{f'(y)}{f(y)}= k \Psi_k(\gamma +yk)+ \lambda \Psi_m\left( \lambda y+\nu+1\right)-m  \Psi_m(\gamma +ym )-\lambda \Psi_k\left( \lambda y+\nu+1\right)
\end{align}
If $\gamma \geq  \nu+1$ and $k \geq \lambda \geq m $, then \eqref{3} can be rewrite as
\begin{align}\label{44}
\frac{f'(y)}{f(y)}\geq  \lambda \big(\Psi_k(\nu+1 +yk)- \Psi_k\left( \lambda y+\nu+1\right)\big)+ m\big( \Psi_m\left( \lambda y+\nu+1\right)- \Psi_m(\nu+1 +ym )\big) \geq 0.
\end{align}
This conclude that $f$, and consequently the sequence $\{d_n\}_{n\geq 0}$, is increasing. Finally the result follows from the Lemma \ref{lemma:1}.

{\bf (3).} It is known that sum of the log-convex functions is log-convex. Thus, to prove the result it is enough  to show that
\[
\nu \mapsto a_{n}^{k}\left( \nu \right) :=\frac{\left( \gamma \right) _{n,k}\Gamma _{k}\left( \nu+1\right)}{%
\Gamma _{k}\left( \lambda n+\nu+1\right) \left( n!\right) ^{2}}\]
is log-convex.

A logarithmic differentiation of $a_n(\nu)$ with respect to $\nu$ yields
\begin{align*}
\frac{\partial}{\partial \nu} \log\left(a_{n}^{k}\left( \nu \right)\right)=\Psi_k\left(\nu+1\right) - \Psi_k\left( \lambda n+\nu+1\right).
\end{align*}
This along with \eqref{def-digamma-2} gives
\begin{align*}
\frac{\partial^2}{\partial\nu^2}\log\left(a_{n}^{k}\left( \nu \right)\right)
&=\Psi'_k\left(\nu+1\right) - \Psi'_k\left( \lambda n+\nu+1\right)\\
&=\sum_{r=0}^\infty  \frac{1}{(rk+\nu+1)^2} - \sum_{r=0}^\infty  \frac{1}{(rk+\lambda n+\nu+1)^2}\\
&=\sum_{r=0}^\infty  \frac{\lambda n(2 rk+\lambda n+2\nu+2)}{(rk+\nu+1)^2(rk+\lambda n+\nu+1)^2} >0,
\end{align*}
for all $n \geq 0$, $k >0$ and $\nu>-1$. Thus, $\nu \mapsto a_{n}^{k}\left( \nu \right)$ is log-convex and hence the conclusion. \\

\noindent
{\bf (4).}
Denote  $\Phi _{k}\left( a,c;x\right)=\sum_{n=0}^\infty c_{n, k}( a, c) x^{n}$
and  $\mathtt{I}_{ k, \nu}^{\gamma ,\lambda }(x)= \sum_{n=0}^\infty a_n(\nu) x^n,$
where
\begin{equation*}
a_n(\nu)= \frac{(\gamma)_{n,k}}{\Gamma_k(\lambda n+\nu+1) (n!)^2 2^n}\quad \text{and} \quad 
d_{n,k}\left( a,c\right) =\frac{\left( a\right) _{n,k}}{\left( c\right)
_{n,k}n!}
\end{equation*}%
with $v>-1$ and $a,c, \lambda, \gamma, k>0.$ To apply Lemma \ref{lemma:1}, consider the sequence
$\left\{ w_{n}\right\} _{n\geq 0}$ defined by
\begin{eqnarray*}
w_{n} =\frac{a_{n}\left( \nu \right) }{d_{n, k}\left( a,c\right) }&=&\frac{%
\Gamma _{k}\left( \gamma +nk\right) }{2^{n}\Gamma _{k}\left( \gamma \right)
\Gamma _{k}\left( \lambda n+\alpha +1\right) \left( n!\right) ^{2}}.\frac{%
\Gamma _{k}\left( a\right) \Gamma _{k}\left( c+nk\right) n!}{\Gamma
_{k}\left( a+nk\right) \Gamma _{k}\left( c\right) } \\
&=&\frac{\Gamma _{k}\left( a\right) }{\Gamma _{k}\left( \gamma \right)
\Gamma _{k}\left( c\right) }\rho _{k}\left( n\right)
\end{eqnarray*}%
where
\begin{equation*}
\rho _{k}\left( x\right) =\frac{\Gamma _{k}\left( \gamma +xk\right) \Gamma
_{k}\left( c+xk\right) }{\Gamma _{k}\left( \lambda x+\nu +1\right) \Gamma
_{k}\left( a+xk\right) 2^x \Gamma(x+1) }.
\end{equation*}%
In view of the increasing properties of $\Psi_k$ on $(0, \infty)$, and
\begin{equation*}
\frac{\rho ^{\prime }\left( x\right) }{\rho \left( x\right) }= k \psi
_{k}\left( \gamma +xk\right) +k\psi _{k}\left( c+xk\right) -\lambda \psi _{k}\left(
\lambda x+\alpha +1\right) -k \psi _{k}\left( a+xk\right),
\end{equation*}
it follows that for  $a\geq c>0$, $\lambda \geq k$ and $\nu+1\geq \gamma$,  the function $\rho$ is decreasing on $
\left( 0,\infty \right) $ and thus the sequence $\left\{ w_{n}\right\} _{n\geq
0} $ also decreasing. Finally the conclusion for $(a)$ follows from the Lemma \ref{lemma:1}.

In the case $(b)$ and $(c)$, the sequence $\{w_n\}$ reduces to 
\begin{eqnarray*}
w_{n} =\frac{a_{n}\left( \nu \right) }{d_{n, k}\left(\gamma, \lambda\right) }&=&\frac{\rho _{k}\left( n\right)}{\Gamma _{k}\left( \lambda \right)
 }
\end{eqnarray*}%
where
\begin{equation*}
\rho _{k}\left( x\right) =\frac{\Gamma _{k}\left( \lambda +xk\right) }{\Gamma _{k}(\nu +1+\lambda x) \Gamma(x+1) }.
\end{equation*}%
Now as in the proof of part (a)  
\begin{equation*}
\frac{\rho _{k}'\left( x\right)}{\rho _{k}\left( x\right)} =k \Psi_k(\lambda +xk)-\lambda \Psi_k(\nu+1 +xk)-\Psi(x+1)>0,
\end{equation*}
if $\nu+1+ \lambda x \geq \lambda + xk$. Now for $x \in (0,1)$, this inequality holds  if $0< k \leq \lambda \leq \nu+1$, while for $x \geq 1$,  it is required that $k \leq \min\{\lambda, \nu+1\}.$
\end{proof}

\end{document}